\newcommand{\vol}{{\rm vol}}
\newtheorem{defi}{Definition}
\newtheorem{thm}{Theorem}
\newtheorem{lem}{Lemma}
\begin{document}
\title[Loxodromic unit vector field on punctured spheres]{Loxodromic unit vector field on punctured spheres}

\author{Jackeline Conrado$^1$}
\author{Adriana V. Nicoli$^2$}
\author{Giovanni S. Nunes$^3$}

\thanks{The first and second authors are supported by a scholarship from the National Doctoral Program, CAPES-PROEX}

\address{Dpto. de Matem\'{a}tica, Instituto de Matem\'{a}tica e Estat\'{i}stica,
Universidade de S\={a}o Paulo, R. do Mat\={a}o 1010, S\={a}o Paulo-SP
05508-900, Brazil.}
\email{jconrado@usp.br, avnicoli@ime.usp.br}

\address{Dpto de Matem\'atica e Estat\'istica, Instituto de Matem\'{a}tica e F\'isica,
Universidade Federal de Pelotas, Rua Gomes Carneiro 1, Pelotas - RS
96001-970, Brazil}
\email{giovanni.nunes@ufpel.edu.br}


\subjclass[2019]{}

\begin{abstract} In these short notes we characterize the loxodromic unit vector fields on antipodally punctured Euclidean spheres as the only ones achieving a lower bound for the volume functional depending on the Poincar\'e indexes around their singularities.
\end{abstract}
\maketitle
\section{Introduction and statement of the results}

Let $M$ be a closed oriented Riemannian manifold and $\nabla$ the Levi Civita connection. Let $\left\{e_a\right\}^n_{a=1}$ be an orthonormal local frame in $M$ and $\vec{v}:M\to T^1M$ a unit vector field on $M$, where $T^1M$ is equipped with the Sasaki metric. The volume of $\vec{v}$ is defined on \cite{GW} and \cite{Jhonson} as
\begin{eqnarray}
\label{defvolume}
\vol(\vec{v}) &=& \int_M \Big(1+\sum_{a=1}^{n} \|\nabla_{e_a} \vec{v} \|^2  + \sum_{a_1<a_2} \| \nabla_{e_{a_1}} \vec{v}\wedge \nabla_{e_{a_2}} \vec{v} \|^2 + \cdots \nonumber \\
&\cdots& + \sum_{a_1<\cdots <a_{n-1}}\|\nabla_{e_{a_1}} \vec{v}\wedge \cdots \wedge \nabla_{e_{a_{n-1}}} \vec{v}  \|^2 
\Big)^{\frac{1}{2}}\nu, 
\end{eqnarray}
where $\nu$ denote the volume form for $\left\{e_a\right\}^n_{a=1}$.

Clearly $\vol(\vec{v}) \geq \vol(M)$ and $\vol(\vec{v})=\vol(M)$ if and only if $\vec{v}$ is a parallel field. Such vector fields are rare because if $M$ admits a unit parallel vector field, then $M$ is locally a Riemannian product. In spheres of even dimension, the vector field with isolated singularities arises naturally in the study of the volume functional. When $M$ is an antipodally punctured sphere, a relation between the volume and the Poincar\'e index of the vector field was established in \cite{BCJ}.

\begin{thm}\label{thmBCJ}(see Brito, Chac\'on, Johnson \cite{BCJ}) Let $M = \mathbb{S}^{n} \backslash \left\{N,S\right\}$, $n = 2$ or $3$, be the standard Euclidean sphere where two antipodal points $N$ and $S$ are removed. Let $\vec{v}$ be a unit vector field defined on $M$. Then,

(a) for $n=2$, $\vol(\vec{v})\geq \frac{1}{2} (\pi + |I_{\vec{v}}(N)| + |I_{\vec{v}}(S)| -2)\vol(\mathbb{S}^2)$,

(b) for $n=3$, $\vol(\vec{v})\geq (|I_{\vec{v}}(N)| + |I_{\vec{v}}(S)|)\vol(\mathbb{S}^3)$,

\noindent
where $I_{\vec{v}}(P)$ stands for the Poincar\'e index of $\vec{v}$ around $P$.
\end{thm}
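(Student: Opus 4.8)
The plan is to pass to geodesic polar coordinates, reduce the volume functional to an explicit one-dimensional integral by a calibration (Cauchy--Schwarz) argument, and then bound that integral below by two elementary convexity estimates that reproduce the index-dependent constant. I will carry out $n=2$ in detail and indicate the heavier $n=3$ analogue. For $n=2$, write $\mathbb{S}^2\setminus\{N,S\}$ in coordinates $(\rho,\theta)\in(0,\pi)\times\mathbb{R}/2\pi\mathbb{Z}$ with metric $d\rho^2+\sin^2\rho\,d\theta^2$ and orthonormal frame $e_1=\partial_\rho$, $e_2=\frac{1}{\sin\rho}\partial_\theta$. Since $\vec v$ is unit I set $\vec v=\cos\phi\,e_1+\sin\phi\,e_2$ for a function $\phi$ defined mod $2\pi$ on the cylinder. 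A direct computation of the Levi--Civita connection gives $\nabla_{e_1}\vec v=(\partial_\rho\phi)\,J\vec v$ and $\nabla_{e_2}\vec v=\big(\tfrac{1}{\sin\rho}\partial_\theta\phi+\cot\rho\big)J\vec v$, where $J$ is rotation by $\pi/2$; both derivatives are parallel, so the wedge term in (\ref{defvolume}) vanishes. Multiplying the resulting integrand by the area element $\sin\rho$ yields
\[
\vol(\vec v)=\int_0^{2\pi}\!\!\int_0^\pi\sqrt{\sin^2\rho+\sin^2\rho\,(\partial_\rho\phi)^2+(\partial_\theta\phi+\cos\rho)^2}\;d\rho\,d\theta .
\]

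The key step is a calibration. For $\mathbf w=(\sin\rho,\ \sin\rho\,\partial_\rho\phi,\ \partial_\theta\phi+\cos\rho)$ and any $\theta$-independent $\mathbf a=(a_1(\rho),0,a_3(\rho))$, Cauchy--Schwarz gives $\|\mathbf w\|\ge \mathbf w\cdot\mathbf a/\|\mathbf a\|$. Integrating in $\theta$ eliminates $\partial_\theta\phi$ up to its period, and this is where the topology enters: $\oint_{\rho=\mathrm{const}}\partial_\theta\phi\,d\theta=2\pi m$ for an integer $m$ independent of $\rho$, and identifying the angle of $\vec v$ against a fixed local frame near each pole (it is $\phi+\theta$ near $N$ and, with reversed orientation, the antipodal version near $S$) gives $I_{\vec v}(N)=m+1$ and $I_{\vec v}(S)=1-m$, so $I_{\vec v}(N)+I_{\vec v}(S)=2$ as Poincar\'e--Hopf requires. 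Optimizing the inequality over $\mathbf a$ at each $\rho$ leaves
\[
\vol(\vec v)\ge 2\pi\int_0^\pi\sqrt{1+m^2+2m\cos\rho}\;d\rho .
\]

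To finish the planar case I bound this integral in two ways. First, $\sqrt{1+m^2+2m\cos\rho}\ge 1+m\cos\rho$ (the difference of squares is $m^2\sin^2\rho\ge0$) integrates to $\pi$, giving $\vol(\vec v)\ge 2\pi^2$, which settles $|m|\le 1$, where $|I_{\vec v}(N)|+|I_{\vec v}(S)|=2$. Second, after the reflection $\rho\mapsto\pi-\rho$ I may assume $m\ge0$, and then $\sqrt{1+m^2+2m\cos\rho}\ge m+\cos\rho$ integrates to $\pi m$, giving $\vol(\vec v)\ge 2\pi^2|m|\ge 2\pi(\pi+2|m|-2)$ precisely when $|m|\ge1$. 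Since $\tfrac12(\pi+|I_{\vec v}(N)|+|I_{\vec v}(S)|-2)\vol(\mathbb{S}^2)=2\pi(\pi+|I_{\vec v}(N)|+|I_{\vec v}(S)|-2)$, the two regimes together give bound (a); tracing the equality cases (vanishing of the discarded radial term and $m=0$) forces $\phi$ constant, i.e. the loxodromic field.

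For $n=3$ I would run the same scheme in geodesic polar coordinates $d\rho^2+\sin^2\rho\,g_{\mathbb{S}^2}$, decomposing $\vec v$ into its $\partial_\rho$-component and its component along the latitude $2$-sphere. The integrand now genuinely carries the $2$-vector terms of (\ref{defvolume}), so the crux becomes a pointwise algebraic inequality bounding the volume density below by the modulus of a Jacobian-type determinant, that is, by a multiple of the pullback under $\vec v$ of the area form of the fibre sphere plus a radial exact term. Integrating the exact term in $\rho$ and the spherical term over the latitude spheres turns the angular contribution into the degree of the map $\vec v/|\vec v|$ on small geodesic spheres about $N$ and $S$, which is exactly $I_{\vec v}(N)$ and $I_{\vec v}(S)$; with $\vol(\mathbb{S}^3)=2\pi^2$ this produces $\vol(\vec v)\ge(|I_{\vec v}(N)|+|I_{\vec v}(S)|)\vol(\mathbb{S}^3)$. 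I expect the main obstacle to be precisely this three-dimensional pointwise inequality, together with the sign bookkeeping in the degree computation at the two poles: both are routine in principle but substantially heavier than the planar estimate.
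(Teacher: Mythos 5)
Your proposal splits into two very different halves, and they deserve different verdicts. Note first that the paper itself does not prove this theorem; it imports it from Brito--Chac\'on--Johnson \cite{BCJ}, and only fragments of that proof (the pointwise bound $\sqrt{1+\kappa^2+\tau^2}\ge\left|\cos \varphi +\left(\kappa\cos \theta +\tau \sin \theta \right)\sin \varphi \right|$ and the identification of $\int i^*\omega_{12}$ over parallels with the Poincar\'e indices) surface later, inside the proof of Lemma \ref{lemI}.

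Your $n=2$ argument is correct and complete, and I checked the delicate points: the connection computation giving $\nabla_{e_1}\vec v=(\partial_\rho\phi)J\vec v$ and $\nabla_{e_2}\vec v=\bigl(\tfrac{1}{\sin\rho}\partial_\theta\phi+\cot\rho\bigr)J\vec v$ is right; the winding number $m$ is indeed independent of $\rho$, and the index bookkeeping $I_{\vec v}(N)=m+1$, $I_{\vec v}(S)=1-m$ (hence $|I_{\vec v}(N)|+|I_{\vec v}(S)|=\max(2,2|m|)$) is correct; both quadratic inequalities hold, since $1+m^2+2m\cos\rho-(1+m\cos\rho)^2=m^2\sin^2\rho$ and $1+m^2+2m\cos\rho-(m+\cos\rho)^2=\sin^2\rho$; and the final comparison reduces to $(\pi-2)(|m|-1)\ge 0$. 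Structurally this is the same strategy as the one visible in \cite{BCJ}: your calibration vector $\mathbf{a}(\rho)$ plays the role of the pair $(\cos\varphi,\sin\varphi)$ in the inequality quoted in Lemma \ref{lemI}, and $\oint\partial_\theta\phi\,d\theta=2\pi m$ is exactly the connection-form integral through which the index enters there; what your version buys is a self-contained, coordinate-level proof in which the topology and the two extremal regimes ($|m|\le 1$ versus $|m|\ge 1$) are handled by elementary algebra.

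Part (b) is a genuine gap, not a proof. The entire difficulty of the three-dimensional case is concentrated in the ``pointwise algebraic inequality'' that you acknowledge but never state, let alone prove: in dimension $3$ the wedge terms of (\ref{defvolume}) no longer vanish, and the unit field is no longer described by a single angle function, so nothing from your planar computation carries over. Worse, the outline as written cannot succeed without an additional idea. On $\mathbb{S}^3$, Poincar\'e--Hopf gives $I_{\vec v}(N)+I_{\vec v}(S)=\chi(\mathbb{S}^3)=0$, so any argument that integrates one globally defined calibrating form and collects the signed degrees at the two poles produces a bound in terms of $|I_{\vec v}(N)+I_{\vec v}(S)|=0$, which is vacuous; the claimed bound involves $|I_{\vec v}(N)|+|I_{\vec v}(S)|=2|I_{\vec v}(N)|$. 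In your $n=2$ proof this issue is invisible because the signed indices sum to $2\neq 0$ and because you implicitly switch calibrations (the bound $1+m\cos\rho$ versus $m+\cos\rho$) according to $|m|$; in dimension $3$ the corresponding sign- or hemisphere-dependent choice must be built into the pointwise inequality itself, and your sketch does not do this. Until that inequality is formulated and proved, statement (b) remains unestablished by your argument.
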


It follows from the Theorem's 1 proof,  that the north-south unit vector field realizes this lower bound. A natural question arises: Is this the only one with this property? The answer is no. We prove that the loxodromic unit vector fields are unique with this property. Precisely:

\begin{thm}\label{principal}
 The lower bound for the volume functional on $\mathbb{S}^2\backslash \{ N, S\}$ is realized if, and only if, $\vec{v}$ is a loxodromic unit vector field.
\end{thm}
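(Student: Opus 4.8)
The plan is to reduce the functional to a single angle function and then isolate the equality cases in the estimate behind Theorem \ref{thmBCJ}(a). First I would fix spherical coordinates $(\phi,\theta)$ on $M=\mathbb{S}^2\setminus\{N,S\}$, $\phi\in(0,\pi)$, $\theta\in[0,2\pi)$, with the globally defined orthonormal frame $e_1=\partial_\phi$, $e_2=\frac{1}{\sin\phi}\partial_\theta$, and write any unit field as $\vec v=\cos\psi\,e_1+\sin\psi\,e_2$ for a smooth angle $\psi$, defined modulo $2\pi$. By definition a loxodromic field meets every meridian at a constant angle, so it is exactly the case $\psi\equiv\text{const}$. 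Using $\nabla_{e_1}e_1=\nabla_{e_1}e_2=0$, $\nabla_{e_2}e_1=\cot\phi\,e_2$, $\nabla_{e_2}e_2=-\cot\phi\,e_1$, a direct computation gives $\|\nabla_{e_1}\vec v\|^2=(\partial_\phi\psi)^2$ and $\|\nabla_{e_2}\vec v\|^2=(\frac{1}{\sin\phi}\partial_\theta\psi+\cot\phi)^2$, whence
\[
\vol(\vec v)=\int_{0}^{2\pi}\!\!\int_{0}^{\pi}\sqrt{\,1+(\partial_\phi\psi)^2+\Big(\tfrac{1}{\sin\phi}\partial_\theta\psi+\cot\phi\Big)^{2}}\;\sin\phi\,d\phi\,d\theta .
\]

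Writing $c=\frac{1}{\sin\phi}\partial_\theta\psi+\cot\phi$, I would then use two pointwise estimates and track their equality cases. Since $t\mapsto\sqrt{1+t^2}$ is strictly convex, its tangent line at $t=\cot\phi$ gives $\sqrt{1+(\partial_\phi\psi)^2+c^2}\ge\sqrt{1+c^2}\ge\sin\phi+c\cos\phi$, with equality if and only if $\partial_\phi\psi=0$ and $c=\cot\phi$. Multiplying by $\sin\phi$ and integrating, the winding term $\int_0^{2\pi}\partial_\theta\psi\,d\theta=2\pi m$ (the $\phi$-independent integer winding of $\vec v$ relative to the frame along a parallel) is annihilated because $\int_0^\pi\cos\phi\,d\phi=0$, and the remaining terms integrate to $\pi^2+\pi^2=2\pi^2$. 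Thus $\vol(\vec v)\ge2\pi^2$, with equality exactly when $\partial_\phi\psi\equiv0$ and $\partial_\theta\psi\equiv0$, i.e. $\psi$ constant. Separately, the crude bound $\sqrt{1+c^2}\,\sin\phi\ge|c\sin\phi|=|\partial_\theta\psi+\cos\phi|$ together with $\int_0^{2\pi}(\partial_\theta\psi+\cos\phi)\,d\theta=2\pi(m+\cos\phi)$ yields $\vol(\vec v)\ge2\pi\int_0^\pi|m+\cos\phi|\,d\phi=2\pi^2|m|$ whenever $|m|\ge1$.

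With these in hand the characterization should follow by relating $m$ to the indices. Near the poles the frame vector $e_1$ itself winds once, so $I_{\vec v}(N)=1+m$ and, by Poincar\'e--Hopf, $I_{\vec v}(S)=1-m$; hence $|I_{\vec v}(N)|+|I_{\vec v}(S)|=2\max(1,|m|)$ and the right-hand side of Theorem \ref{thmBCJ}(a) equals $\frac12(\pi+2\max(1,|m|)-2)\,\vol(\mathbb S^2)=2\pi\big(\pi+2\max(1,|m|)-2\big)$. If $\vec v$ realizes this bound, I split on $|m|$: when $|m|\le1$ the bound is $2\pi^2$, so $\vol(\vec v)=2\pi^2$ and the equality clause of the convexity estimate forces $\psi$ constant, i.e. $\vec v$ loxodromic; when $|m|\ge2$ the bound is $2\pi(\pi+2|m|-2)$, which is strictly smaller than $2\pi^2|m|\le\vol(\vec v)$ (the difference being $2\pi(\pi-2)(|m|-1)>0$), so the bound cannot be realized. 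Conversely, for constant $\psi$ the integrand collapses to $\sqrt{1+\cot^2\phi}\,\sin\phi=1$, giving $\vol=2\pi^2$, while $m=0$ yields indices $1,1$ and bound $2\pi^2$, so loxodromic fields do realize it.

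The step I expect to be most delicate is the global bookkeeping that elevates the pointwise convexity estimate to the index-dependent statement of Theorem \ref{thmBCJ}(a): the convexity line alone only produces the minimal value $2\pi^2$, so it cannot by itself exclude fields of higher Poincar\'e index, and the second (fiber-length) estimate $\vol\ge2\pi^2|m|$ is exactly what is needed to rule those out. Making the identifications $I_{\vec v}(N)=1+m$ and $I_{\vec v}(S)=1-m$ rigorous---that is, justifying that the winding of $\vec v$ relative to the meridian frame along a parallel is independent of the parallel and computes the indices at the punctures---is the technical heart; once that is secured, the two integral inequalities and their equality analyses close the argument.
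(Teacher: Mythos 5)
Your proposal is correct, and it takes a genuinely different route from the paper's. The paper works in the frame $\{\vec{v}^{\perp},\vec{v}\}$ with the geodesic curvatures $\kappa,\tau$: its Lemma \ref{lemI} imports the equality conditions $|\sin\varphi|=\sqrt{\kappa^2+\tau^2}\cos\varphi$ and $\kappa\sin\theta=\tau\cos\theta$ from the proof of Theorem \ref{thmBCJ} in \cite{BCJ}, and its Lemma \ref{lemII} converts these, via $\kappa=-\theta_{\vec{v}}-\cos\theta\,g(\nabla_{\vec{u}}\vec{u},\vec{n})$ and $g(\nabla_{\vec{u}}\vec{u},\vec{n})=-\tan\varphi$, into $\theta_{\vec{v}}=0$; the converse runs back through the same two lemmas. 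You instead encode $\vec{v}$ by a single angle $\psi$ in the meridian coordinate frame and redo the estimates from scratch: the tangent-line (convexity) bound $\sqrt{1+(\partial_\phi\psi)^2+c^2}\geq\sin\phi+c\cos\phi$ yields $\vol(\vec{v})\geq 2\pi^2$ with equality exactly when $\psi$ is constant, while the cruder fiber bound $\vol(\vec{v})\geq 2\pi^2|m|$ disposes of windings $|m|\geq 2$, for which the index-dependent bound of Theorem \ref{thmBCJ}(a) is strictly smaller and hence never attained. This buys two things the paper leaves implicit: your argument is self-contained (the paper's ``only if'' direction rests on the equality analysis inside \cite{BCJ}, which it does not reproduce), and it explicitly settles the higher-index case --- the paper's Lemma \ref{lemI} tacitly identifies ``realizing the lower bound'' with $\vol(\vec{v})=\frac{\pi}{2}\vol(\mathbb{S}^2)=2\pi^2$, which is the value of the bound only when $|I_{\vec{v}}(N)|+|I_{\vec{v}}(S)|=2$. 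What the paper's route buys is brevity, given the citation, and a formulation in the intrinsic invariants $\kappa,\tau$. Two finishing touches for your write-up: (1) the identification $\{I_{\vec{v}}(N),I_{\vec{v}}(S)\}=\{1+m,1-m\}$, which you rightly flag as the technical heart, should be justified by the standard device the paper itself invokes --- integrating $i^*\omega_{12}$ over parallels as in \cite{Manfredo}, the index at a pole being the winding of $\vec{v}$ relative to the frame plus the frame's own index $1$ there; since only $|1+m|+|1-m|=2\max(1,|m|)$ enters, sign conventions are immaterial; (2) your equality case gives $\psi$ globally constant, which is precisely the reading of ``loxodromic'' the paper actually uses in its converse direction (where it sets $\theta_{\vec{v}}=\theta_{\vec{v}^{\perp}}=0$), so your interpretation of the definition matches the paper's usage.
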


\section{Preliminaries}

Let $M = \mathbb{S}^{2} \backslash \left\{N,S\right\}$ be the standard Euclidean sphere whose two antipodal points $N$ and $S$ are removed. Denote by $g$ the usual metric of $\mathbb{S}^2$ induced from $\mathbb{R}^3$, and by $\nabla$ the Levi-Civita connection associated to $g$. Let $\vec{v}$ be an unit vector field in $M$. We compute the volume of $\vec{v}$ making use of a global orthornormal special frame and using the Levi-Civita conection $\nabla$ of $M$. Consider the oriented orthonormal local frame $\left\{ e_1 = \vec{v}^{\perp} , e_2 = \vec{v} \right\}$ on $M$ and its dual basis  $\left\{ \omega_1, \omega_2 \right\}$. The connection $1$-forms of $\nabla$ are $\omega_{ij}(X) = g(\nabla_{X}e_j, e_i)$ for $i,j = 1,2$ where $X$ is a vector in the corresponding tangent space. In dimension $2$, the volume (\ref{defvolume}) reduces to
\begin{eqnarray}\label{volreduces}
\vol(\vec{v}) = \int_{\mathbb{S}^2}{\sqrt{1+ \kappa^2 + \tau^2}}\nu,
\end{eqnarray}
where $\kappa=g(\nabla_{\vec{v}}\vec{v}, \vec{v}^{\perp})$ is the geodesic curvature of the integral curves tangent to $\vec{v}$ and $\tau = g(\nabla_{v^{\perp}}\vec{v}, \vec{v}^{\perp})$ is the geodesic curvature of the curves orthogonal to $\vec{v}$. Also,
\[\omega_{12}= \tau \omega_1 + \kappa\omega_2.\]

In a sphere, a  \textit{loxodromic} (or \textit{rhumb line}) is a curve crossing all parallels at the same angle.  Because of their nature, these curves spiral towards the poles, as can see at the Figure \ref{loxodromic}. Let us define \textit{loxodromic vector field}.

\begin{defi}
A \textbf{loxodromic unit vector field  in $M$} is an unit vector field that forms a constant angle along each parallel in $\mathbb{S}^2$.
\end{defi}
 
Observe that the north-south unit vector field is a loxodromic unit vector field in $M$.

 \begin{figure}[H]
	\centering
	\includegraphics[width=8cm]{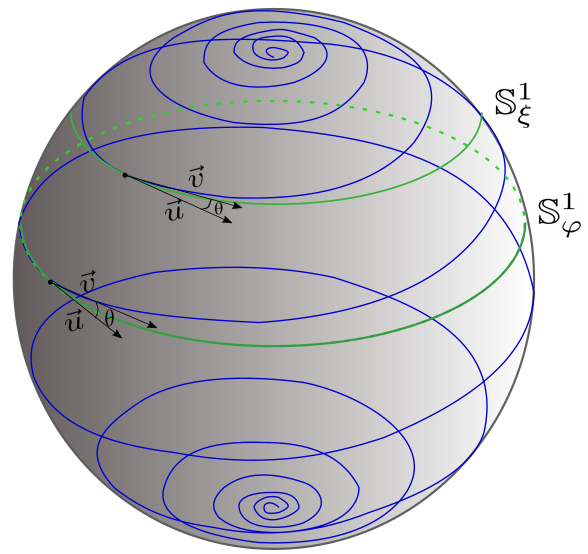}
	\caption{Loxodromic curve on $\mathbb{S}^2$}
	\label{loxodromic}
\end{figure}

 Let $\mathbb{S}^1_{\varphi}$ be the parallel of $\mathbb{S}^2$ at latitude $\varphi \in (-\frac{\pi}{2}, \frac{\pi}{2})$, figure \ref{Svarphi}. Let $\{\vec{u},\vec{n}\}$ be an oriented frame where $\vec{u}$ is tangent to $\mathbb{S}^1_{\varphi}$ and $\vec{n}$ is parallel to a south-north meridian. Let $\theta \in [0,\pi/2]$ be the oriented angle from $\vec{u}$ to $\vec{v}$. Then ${\vec{u}=\sin \theta \vec{v}^{\perp} + \cos \theta \vec{v} }$.
 
 \begin{figure}[H]
 	\centering
 	\includegraphics[width=8cm]{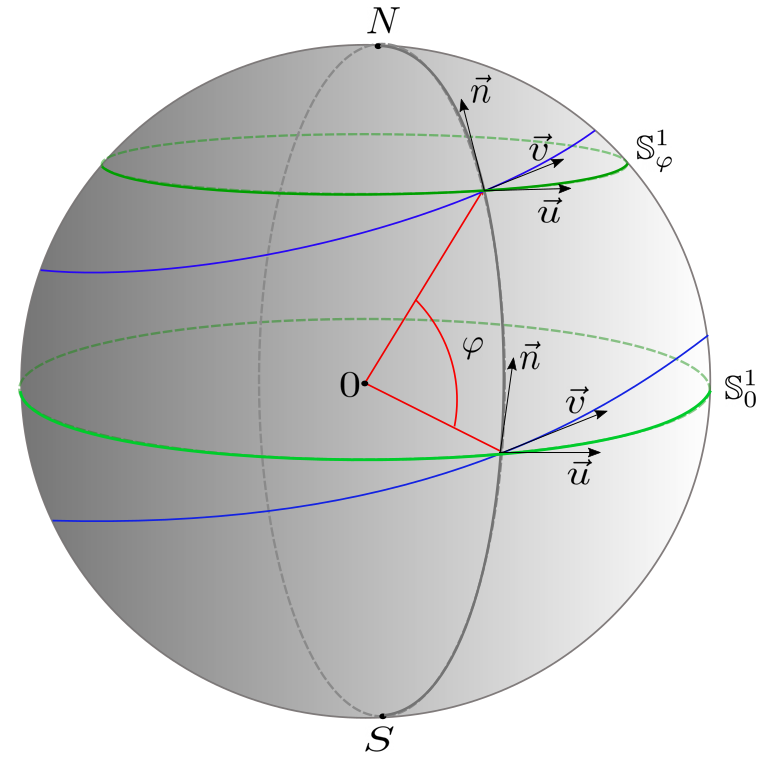}
 	\caption{$\mathbb{S}^1_{\varphi}$ be the parallel of $\mathbb{S}^2$ at latitude $\varphi \in (-\frac{\pi}{2}, \frac{\pi}{2})$.}
 	\label{Svarphi}
 \end{figure}
     
\section{Proof of the theorem}
Consider $\mathbb{S}^2 = \mathbb{S}^+ \cup \mathbb{S}^-$, where $\mathbb{S}^+$ and $\mathbb{S}^-$ are the northern and southern hemisphere, respectively. 

\begin{lem}\label{lemI} Let $\vec{v}$ be an unit vector field. If $\vec{v}$ satisfies $\left| \sin \varphi \right| = \sqrt{\kappa^2+\tau^2} \cos \varphi$ then, $\vec{v}$ realize the lower bound for the volume on $\mathbb{S}^2$. On the other hand, if $\vec{v}$ realize the lower bound for the volume on $\mathbb{S}^2$, then $\vec{v}$ satisfies
\begin{eqnarray}\label{cond_sharpness}
i)\left| \sin \varphi \right| = \sqrt{\kappa^2+\tau^2} \cos \varphi \hspace{0.4cm} \mbox{and} \hspace{0.4cm} ii) \kappa\sin \theta = \tau \cos \theta.
\end{eqnarray}
\end{lem}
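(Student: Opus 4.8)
The plan is to analyze when equality holds in the lower bound of Theorem~\ref{thmBCJ}(a), which amounts to tracing the chain of inequalities in the proof of that theorem and extracting the equality conditions. The key idea is that the volume integral $\vol(\vec{v})=\int_{\mathbb{S}^2}\sqrt{1+\kappa^2+\tau^2}\,\nu$ is bounded below by comparing the integrand to a linear expression in $\kappa$ and $\tau$, and equality in such a comparison forces pointwise algebraic relations. First I would recall the two inequalities used to pass from $\vol(\vec v)$ to the index bound: one is an application of the elementary inequality $\sqrt{1+x^2+y^2}\ge a+bx+cy$ (for suitable coefficients depending on the geometry, i.e.\ on $\varphi$), and the other is the estimate relating the connection form $\omega_{12}=\tau\omega_1+\kappa\omega_2$ to the angle $\theta$ between $\vec u$ and $\vec v$ via $\vec u=\sin\theta\,\vec v^{\perp}+\cos\theta\,\vec v$.

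The forward direction (sufficiency) is the routine check: assuming $|\sin\varphi|=\sqrt{\kappa^2+\tau^2}\cos\varphi$, I would substitute this into $\sqrt{1+\kappa^2+\tau^2}$, simplify using $1=\cos^2\varphi+\sin^2\varphi$ to get $\sqrt{1+\kappa^2+\tau^2}\,\cos\varphi=1$ along the relevant region, and verify directly that the integral attains exactly the claimed bound $\tfrac12(\pi+|I_{\vec v}(N)|+|I_{\vec v}(S)|-2)\vol(\mathbb{S}^2)$. The more substantive direction is necessity: here I would use the Cauchy--Schwarz (or Young) equality condition in the comparison inequality. Writing the integrand as $\sqrt{1+(\kappa^2+\tau^2)}$ and bounding it by the inner product with the unit vector $(\cos\varphi,\sin\varphi)$ (or its analogue), equality in Cauchy--Schwarz forces the vectors $(1,\sqrt{\kappa^2+\tau^2})$ and $(\cos\varphi,|\sin\varphi|)$ to be parallel, which is precisely condition~$i)$, namely $|\sin\varphi|=\sqrt{\kappa^2+\tau^2}\cos\varphi$.

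For condition~$ii)$, the point is that the index bound comes from integrating $\omega_{12}$ around parallels and estimating $|\int\omega_{12}|$ by $\int\sqrt{\kappa^2+\tau^2}$; the passage from the signed quantity $\tau\sin\theta-\kappa\cos\theta$ (or similar, appearing in the component of $\omega_{12}$ along $\vec u$) to its absolute value via $\sqrt{\kappa^2+\tau^2}$ is again an equality-in-Cauchy--Schwarz situation, where the vector $(\tau,\kappa)$ must align with $(\sin\theta,\cos\theta)$, giving $\kappa\sin\theta=\tau\cos\theta$. I would make this precise by expanding $\omega_{12}(\vec u)=\tau\sin\theta+\kappa\cos\theta$ and $\omega_{12}(\vec n)$, identifying which combination is integrated over the parallel to recover the index, and reading off the saturation condition.

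The main obstacle I anticipate is bookkeeping the exact form of the inequalities in the proof of Theorem~\ref{thmBCJ}, since the equality conditions depend sensitively on which auxiliary vectors appear and on the orientation conventions for $\theta$ and $\varphi$; in particular I must be careful that the two equality conditions $i)$ and $ii)$ are genuinely \emph{independent} consequences (one from the volume comparison, one from the index/winding estimate) rather than redundant, and that the absolute values $|\sin\varphi|$ correctly encode the splitting $\mathbb{S}^2=\mathbb{S}^+\cup\mathbb{S}^-$ across the equator. Verifying that these two pointwise conditions together characterize loxodromic fields (to be done in the subsequent part of the argument) is deferred, but the present lemma only requires establishing the equivalence between realizing the bound and the pair $i)$, $ii)$.
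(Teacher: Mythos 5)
Your proposal follows essentially the same route as the paper's proof: sufficiency by the direct computation (condition $i)$ forces $\sqrt{1+\kappa^2+\tau^2}=\sec\varphi$, whose integral over each hemisphere is $\pi^2$, giving $\vol(\vec v)=2\pi^2=\frac{\pi}{2}\vol(\mathbb{S}^2)$), and necessity by tracing the equality cases of the two estimates in the proof of Theorem \ref{thmBCJ}. Indeed, the paper quotes from \cite{BCJ} the chain $\sqrt{1+\kappa^2+\tau^2}=\left|\cos\varphi+\sqrt{\kappa^2+\tau^2}\sin\varphi\right|=\left|\cos\varphi+\left|\kappa\cos\theta+\tau\sin\theta\right|\sin\varphi\right|$ and reads off $i)$ from the first equality and $ii)$ from the second, exactly your two Cauchy--Schwarz saturation conditions $(1,\sqrt{\kappa^2+\tau^2})\parallel(\cos\varphi,|\sin\varphi|)$ and $(\kappa,\tau)\parallel(\cos\theta,\sin\theta)$ (note the latter comes from $\omega_{12}(\vec u)=\kappa\cos\theta+\tau\sin\theta$, with a plus sign, as you correct yourself later in the paragraph).

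One loose end in your sufficiency direction should be made explicit: the bound to be realized, $\frac{1}{2}(\pi+|I_{\vec v}(N)|+|I_{\vec v}(S)|-2)\vol(\mathbb{S}^2)$, depends on the indices of $\vec v$, so computing $\vol(\vec v)=2\pi^2$ does not by itself show the bound is attained; you still need $|I_{\vec v}(N)|+|I_{\vec v}(S)|=2$. The paper settles this by observing that the integral of $i^*\omega_{12}$ over the parallels yields $I_{\vec v}(N)=I_{\vec v}(S)=1$. Alternatively, it follows from what you already have: Poincar\'e--Hopf gives $I_{\vec v}(N)+I_{\vec v}(S)=2$, hence $|I_{\vec v}(N)|+|I_{\vec v}(S)|\geq 2$, so the lower bound is at least $2\pi^2$, while Theorem \ref{thmBCJ}(a) applied to the computed volume $2\pi^2$ forces $|I_{\vec v}(N)|+|I_{\vec v}(S)|\leq 2$; equality follows. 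Either patch is one line, but without it the phrase ``attains exactly the claimed bound'' is not yet justified, since the claimed bound is not a fixed number.
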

\begin{proof}
Let $\vec{v}$ be satisfying  $\left| \sin \varphi \right| = \sqrt{\kappa^2+\tau^2} \cos \varphi$.\\
We analyze  two cases: north and south. 

In the first case consider the northern hemisphere $\mathbb{S}^+$.

Remember the general inequality, $\sqrt{a^2 + b^2} \geq |a\cos \beta + b \sin \beta|$. If $b\cos \beta = a \sin \beta$ then,
\begin{equation}\label{igualdade}\sqrt{a^2 + b^2} = \left|a\cos \beta + b \sin \beta \right|. 
\end{equation}
for any $a$, $b$, $\beta \in \mathbb{R}$.

Considering the positive part of $\left| \sin \varphi \right| = \sqrt{\kappa^2+\tau^2} \cos \varphi$ and (\ref{igualdade}), we have 
\[\sqrt{1+\kappa^2+\tau^2}=|\cos{\varphi}+\sqrt{\kappa^2+\tau^2}\sin{\varphi}|.\]

As we are in the $\mathbb{S}^+$, i.g. $0 \leq \varphi < \pi/2$,  implies that 
\[\sqrt{1+\kappa^2+\tau^2}=\cos{\varphi}+\sqrt{\kappa^2+\tau^2}\sin{\varphi},\]
and by the hypotheses, we get
\begin{equation}\label{cond_sharpnessI}
\sqrt{\kappa^2+\tau^2} = \tan{\varphi}.
\end{equation}


Then
\begin{equation}\label{igualdadevolume}
\sqrt{1 + \kappa^2+\tau^2} = \cos \varphi + \sqrt{\kappa^2+\tau^2} \sin \varphi = \cos \varphi + \tan \varphi \sin \varphi 
\end{equation}
where $0 \leq \varphi < \pi/2.$

 Denote by $\nu'$ the induced volume form to ${\mathbb{S}^1}_{\varphi}$. From (\ref{volreduces}) and (\ref{igualdadevolume}) we conclude
\begin{equation}\label{vol_hemis_norte}
\vol(\vec{v})_{|_{\mathbb{S}^+}} = \int_{\mathbb{S}^+}{\cos \varphi + \sqrt{\kappa^2 + \tau^2} \sin \varphi \nu} 
\end{equation}
\[
\hspace{2.3cm}= \int_0^{\frac{\pi}{2}}{ \int_{\mathbb{S}^1_{\varphi}} \cos \varphi  + \tan \varphi \sin \varphi \nu'd\varphi}
\]
\[
\hspace{2.1cm}= 2\pi \int_0^{\frac{\pi}{2}}{ \cos^2 \varphi + \sin^2 \varphi d\varphi} = \pi^2.
\]

For the second case, consider the southern hemisphere $\mathbb{S}^{-}.$
 
In this part $-\pi/2 < \varphi \leq 0$, (\ref{igualdade}) together with the negative part of $\left| \sin \varphi \right| = \sqrt{\kappa^2+\tau^2} \cos \varphi$ we have 
\[\sqrt{1+\kappa^2+\tau^2}= \left| \cos{\varphi}-\sqrt{\kappa^2+\tau^2}\sin{\varphi}\right|.\]
Observe that $\sin \varphi \leq 0$ and $\cos \varphi \geq 0$ implies
\begin{equation}\label{}
\left| \cos{\varphi}-\sqrt{\kappa^2+\tau^2}\sin{\varphi}\right| = \cos \varphi -\sqrt{\kappa^2+\tau^2}\sin{\varphi}. 
\end{equation}
and
\[ \sqrt{\kappa^2+\tau^2} = \left| \tan \varphi\right| = - \tan \varphi.\]
We attain
\begin{equation}\label{vol_hemis_sul}
\vol(\vec{v})_{|_{\mathbb{S}^-}} = \int_{\mathbb{S}^-}{\cos \varphi - \sqrt{\kappa^2 + \tau^2} \sin \varphi \nu} 
\end{equation}
\[
\hspace{2.6cm}= \int_{\frac{-\pi}{2}}^0{ \int_{\mathbb{S}^1_{\varphi}} \cos \varphi  - \left|\tan \varphi\right| \sin \varphi \nu'd\varphi}
\]
\[
\hspace{3.1cm}= \int_{\frac{-\pi}{2}}^0{ \int_{\mathbb{S}^1_{\varphi}} \cos \varphi  - (-\tan \varphi) \sin \varphi \nu'd\varphi}
\]
\[
\hspace{3.9cm}= 2\pi \int_{\frac{-\pi}{2}}^0{ \cos^2 \varphi + \tan \varphi \sin \varphi \cos\varphi d\varphi} = \pi^2.
\]

From (\ref{vol_hemis_norte}) and (\ref{vol_hemis_sul}) we get the volume of $\vec{v}$ is
$
\vol(\vec{v})_{|_{\mathbb{S}^2}} = 2\pi^2  = \frac{\pi}{2} \vol(\mathbb{S}^2). 
$
We conclude that $\vec{v}$ realizes the lower bound for the volume on $\mathbb{S}^2$.

Notice that the integral of $i^*\omega_{12}$, where $i$ is the inclusion of $\mathbb{S}^1_{\varphi}$ in $\mathbb{S}^2$, give us the indices of singularities (see \cite{Manfredo}) and, in our case,  $I_{\vec{v}}(N)=I_{\vec{v}}(S)=1$.

We now show that the $\vec{v}$ satisfies the conditions i) and ii) if, $\vec{v}$ realizes the lower bound for the volume on $\mathbb{S}^2$.
 
It follows from the proof of Theorem's 1 in \ref{thmBCJ} that the condition
\[ \sqrt{1 + \kappa^2 + \tau^2} = \left|\cos \varphi + \sqrt{\kappa^2 + \tau^2} \sin \varphi \right| = \left|\cos \varphi + \left|\kappa \cos \theta + \tau \sin \theta \right| \sin \varphi \right|.\]
is a consequence of $\vol(\vec{v})=\frac{\pi}{2}\vol(\mathbb{S}^2)$.

In the case  $0 \leq \varphi < \pi/2$, the first equality implies $\sin \varphi \ = \sqrt{\kappa^2+\tau^2} \cos \varphi $. 
If $\ -\pi/2 < \varphi \leq 0$, in $\mathbb{S}^-$, then $0 \leq \ -\varphi < \pi/2$ so $ \sin(-\varphi)  = \sqrt{\kappa^2+\tau^2} \cos(-\varphi) $  or $\ -\sin \varphi \ = \sqrt{\kappa^2+\tau^2} \cos \varphi$, therefore $\left| \sin \varphi \right| = \sqrt{\kappa^2+\tau^2} \cos \varphi$ for $-\pi/2< \varphi < \pi/2$.
From second equality we get the condition ii).  
\end{proof}

\begin{lem}\label{lemII} 
Let $M$ be a $2$-dimensional Riemannian manifold. If $\vec{v}$ be a unit vector field in $M$ and $\{\vec{u},\vec{n}\}$ be an oriented frame. Then 
\[\kappa = -\theta_{\vec{v}} - \cos \theta g(\nabla_{\vec{u}}{\vec{u}}, \vec{n}) \hspace{0.4cm}\mbox{and}\hspace{0.4cm} \tau = -\theta_{\vec{v}^{\perp}} + \sin \theta g(\nabla_{\vec{u}}{\vec{n}}, \vec{n}).\]
\end{lem}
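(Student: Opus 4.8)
The plan is to compare the two positively oriented orthonormal frames $\{\vec v^{\perp},\vec v\}$ and $\{\vec u,\vec n\}$. From the Preliminaries we have $\vec u=\sin\theta\,\vec v^{\perp}+\cos\theta\,\vec v$, which I would complete by $\vec n=-\cos\theta\,\vec v^{\perp}+\sin\theta\,\vec v$ (the unique choice making $\{\vec u,\vec n\}$ positively oriented). Inverting these gives
\[\vec v=\cos\theta\,\vec u+\sin\theta\,\vec n,\qquad \vec v^{\perp}=\sin\theta\,\vec u-\cos\theta\,\vec n,\]
so that the two frames differ by a rotation through the angle $\theta-\tfrac{\pi}{2}$. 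These substitutions will be plugged into the definitions $\kappa=g(\nabla_{\vec v}\vec v,\vec v^{\perp})$ and $\tau=g(\nabla_{\vec v^{\perp}}\vec v,\vec v^{\perp})$.

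First I would record the transformation law for the connection $1$-form under this rotation. Writing $\omega_{12}$ for the form of $\{\vec v^{\perp},\vec v\}$, so that $\kappa=\omega_{12}(\vec v)$ and $\tau=\omega_{12}(\vec v^{\perp})$ via $\omega_{12}=\tau\omega_1+\kappa\omega_2$, and setting $\bar\omega(X)=g(\nabla_X\vec n,\vec u)$ for the form of $\{\vec u,\vec n\}$, a short computation with the Leibniz rule yields $\omega_{12}=\bar\omega-d\theta$. Evaluating on $\vec v$ and $\vec v^{\perp}$ and using $d\theta(\vec v)=\theta_{\vec v}$, $d\theta(\vec v^{\perp})=\theta_{\vec v^{\perp}}$, the problem reduces to computing $\bar\omega(\vec v)$ and $\bar\omega(\vec v^{\perp})$.

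Next I would expand these two quantities using $\vec v=\cos\theta\,\vec u+\sin\theta\,\vec n$ and $\vec v^{\perp}=\sin\theta\,\vec u-\cos\theta\,\vec n$. Since $\{\vec u,\vec n\}$ is orthonormal, $\nabla_X\vec u$ is a multiple of $\vec n$ and $\nabla_X\vec n$ a multiple of $\vec u$, so each $\bar\omega(\cdot)$ collapses to a combination of $g(\nabla_{\vec u}\vec n,\vec u)$ and $g(\nabla_{\vec n}\vec n,\vec u)$. Here I would invoke that the integral curves of $\vec n$ are geodesics, i.e. $\nabla_{\vec n}\vec n=0$ (as for the meridians in the intended application), which kills the $\vec n$-directional contribution and leaves $\bar\omega(\vec v)=\cos\theta\,g(\nabla_{\vec u}\vec n,\vec u)$ and $\bar\omega(\vec v^{\perp})=\sin\theta\,g(\nabla_{\vec u}\vec n,\vec u)$. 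Combining with the previous step gives the two formulas, after rewriting $g(\nabla_{\vec u}\vec n,\vec u)=-g(\nabla_{\vec u}\vec u,\vec n)$ in the expression for $\kappa$.

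The main obstacle is sign bookkeeping: fixing the correct sign in the transformation $\omega_{12}=\bar\omega-d\theta$ (equivalently, the correct rotation angle $\theta-\tfrac{\pi}{2}$ between the frames), and keeping the collapse $\cos^2\theta+\sin^2\theta=1$ straight when the $\theta$-derivative terms are gathered into $-\theta_{\vec v}$ and $-\theta_{\vec v^{\perp}}$. One point of care for the $\tau$-term: the last slot must be $\vec u$, since $g(\nabla_{\vec u}\vec n,\vec n)\equiv 0$ for the unit field $\vec n$, the surviving contribution being $g(\nabla_{\vec u}\vec n,\vec u)=-g(\nabla_{\vec u}\vec u,\vec n)$. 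I would also note that without the geodesic hypothesis on $\vec n$ an extra term proportional to $g(\nabla_{\vec n}\vec u,\vec n)$ survives in each expression, so that this hypothesis is precisely what produces the stated clean form.
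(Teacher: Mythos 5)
Your proposal is correct, and in substance it performs the same computation as the paper --- pass to the frame $\{\vec{u},\vec{n}\}$, expand covariant derivatives by the Leibniz rule, and kill terms using orthonormality --- but you organize it differently: the paper expands $g(\nabla_{\vec{v}}\vec{v},\vec{v}^{\perp})$ by brute force into six labelled terms, simplifies, and then dismisses the $\tau$-formula with ``a similar computation,'' whereas you derive the rotation law $\omega_{12}=\bar\omega-d\theta$, $\bar\omega(X)=g(\nabla_X\vec{n},\vec{u})$, once and read off both $\kappa=\omega_{12}(\vec{v})$ and $\tau=\omega_{12}(\vec{v}^{\perp})$ simultaneously; your sign for this transformation is the right one, and the gain in economy and transparency is genuine. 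More importantly, you flag two points on which the paper is careless, and you are right on both. First, the hypothesis $g(\nabla_{\vec{n}}\vec{n},\vec{u})=0$ (the integral curves of $\vec{n}$ are geodesics) is genuinely needed: the paper asserts $g(\nabla_{\vec{n}}\vec{n},\vec{u})=g(\nabla_{\vec{n}}\vec{u},\vec{u})=0$ ``by orthogonality between $\vec{u}$ and $\vec{n}$,'' but only the second vanishing follows from orthonormality; the first is false for a general oriented frame on a general surface, so the lemma as stated (arbitrary $M$, arbitrary frame) is not true without your added hypothesis --- it holds in the intended application because $\vec{n}$ is tangent to the meridians of $\mathbb{S}^2$, which are geodesics. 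Second, your correction of the $\tau$-formula is also warranted: as printed, the term $g(\nabla_{\vec{u}}\vec{n},\vec{n})$ is identically zero since $\vec{n}$ is a unit field, so the statement should read $g(\nabla_{\vec{u}}\vec{n},\vec{u})=-g(\nabla_{\vec{u}}\vec{u},\vec{n})$; this is exactly consistent with the paper's later use of the lemma in the proof of Theorem \ref{principal}, where that term is assigned the value $\tan\varphi$, which on the sphere is the value of $g(\nabla_{\vec{u}}\vec{n},\vec{u})$, not of the identically vanishing quantity written there.
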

\begin{proof}
Computing $\kappa = g(\nabla_{\vec{v}}\vec{v},\vec{v}^{\perp})$. Without loss of generality we may assume $\theta \in (0, \pi/2]$, so
\[\vec{v} = \cos \theta \vec{u} + \sin \theta \vec{n} \hspace{0.4cm}\mbox{and}\hspace{0.4cm} \vec{v}^{\perp} = \sin \theta\vec{u} - \cos \theta\vec{n}.\]
Thus,
\[ 
\nabla_{\vec{v}} \vec{v} =  \left[ \cos^2 \theta\nabla_{\vec{u}} \vec{u} + \cos \theta\sin \theta\nabla_{\vec{n}} \vec{u} + \vec{v}(\cos \theta)\vec{u}\right] + \left[ \cos \theta\sin \theta\nabla_{\vec{u}} \vec{n} + \sin^2 \theta\nabla_{\vec{n}} \vec{n} + \vec{v}(\sin \theta)\vec{n} \right]
\] 
and
\[
g\left( \nabla_{\vec{v}} \vec{v}, \vec{v}^{\perp} \right) =  g\left( \left[ \cos^2 \theta \nabla_{\vec{u}} \vec{u} + \cos \theta \sin \theta \nabla_{\vec{n}} \vec{u} + \vec{v}(\cos \theta)\vec{u}\right], \vec{v}^{\perp} \right)\]
\[ + g\left( \left[ \cos \theta\sin \theta \nabla_{\vec{u}} \vec{n} + \sin^2 \theta \nabla_{\vec{n}} \vec{n} + \vec{v}(\sin \theta)\vec{n} \right], \vec{v}^{\perp} \right)
\]
\[
= \underbrace{g\left( \cos^2 \theta \nabla_{\vec{u}} \vec{u}, \vec{v}^{\perp} \right)}_{a} + \underbrace{g\left( \cos \theta \sin \theta \nabla_{\vec{n}} \vec{u}, \vec{v}^{\perp} \right)}_{b} + \underbrace{g\left( \vec{v}(\cos \theta)\vec{u}, \vec{v}^{\perp} \right)}_{c} + \underbrace{g\left( \cos \theta\sin \theta \nabla_{\vec{u}} \vec{n}, \vec{v}^{\perp} \right)}_{d}\] \[ + \underbrace{g\left( \sin^2 \theta \nabla_{\vec{n}} \vec{n}, \vec{v}^{\perp} \right)}_{e} + \underbrace{g\left( \vec{v}(\sin \theta)\vec{n}, \vec{v}^{\perp} \right)}_{f},
\]
where
\[
a = g\left( \cos^2\theta \nabla_{\vec{u}} \vec{u}, \vec{v}^{\perp} \right) = \cos^2\theta\sin \theta g\left( \nabla_{\vec{u}}\vec{u}, \vec{u}\right) - \cos^3\theta g\left( \nabla_{\vec{u}}\vec{u}, \vec{n}\right) = - \cos^3\theta g\left( \nabla_{\vec{u}}\vec{u}, \vec{n}\right) .
\]

\[
b = g\left( \cos\theta\sin \theta \nabla_{\vec{n}} \vec{u}, \vec{v}^{\perp} \right) = \cos\theta\sin^2 \theta g\left( \nabla_{\vec{n}}\vec{u}, \vec{u}\right) - \cos^2\theta\sin \theta g\left( \nabla_{\vec{n}}\vec{u}, \vec{n}\right).
\]

\[
c = g\left( \vec{v}(\cos\theta)\vec{u}, \vec{v}^{\perp} \right) = \vec{v}(\cos\theta)\sin \theta = - \theta_{\vec{v}} \sin^2 \theta.
\]

\[
d = g\left( \cos\theta\sin \theta \nabla_{\vec{u}} \vec{n}, \vec{v}^{\perp} \right) = \cos\theta\sin^2 \theta g\left( \nabla_{\vec{u}}\vec{n}, \vec{u}\right) - \cos^2\theta\sin \theta g\left( \nabla_{\vec{u}}\vec{n}, \vec{n}\right).
\]

\[
e = g\left( \sin^2 \theta \nabla_{\vec{n}} \vec{n}, \vec{v}^{\perp} \right) =  \sin^3 \theta g\left( \nabla_{\vec{n}}\vec{n}, \vec{u}\right) - \cos\theta\sin^2 \theta g\left( \nabla_{\vec{n}}\vec{n}, \vec{n}\right) =  \sin^3 \theta g\left( \nabla_{\vec{n}}\vec{n}, \vec{u}\right).
\]

\[
f = g\left( v(\sin \theta)\vec{n}, \vec{v}^{\perp} \right) = \vec{v}(\sin \theta)\cos\theta = - \theta_v\cos^2 \theta.
\]
By orthogonality between $\vec{u}$ and $\vec{n}$, 
\[g(\nabla_{\vec{n}}\vec{n},\vec{u}) = g(\nabla_{\vec{n}}\vec{u},\vec{u})=0.\]
Then,
\[
g\left( \nabla_{\vec{v}} \vec{v}, \vec{v}^{\perp} \right) = - \cos^3\theta g\left( \nabla_{\vec{u}}\vec{u}, \vec{n}\right) -\theta_{\vec{v}}\sin^2 \theta + \cos \theta\sin^2 \theta g\left( \nabla_{\vec{u}}\vec{n}, \vec{u}\right) - \theta_{\vec{v}}\cos^2 \theta.
\]
\[
= -\theta_{\vec{v}} + \left[ (-\cos^3 \theta - \cos \theta \sin^2 \theta) g\left( \nabla_{\vec{u}}\vec{u}, \vec{n} \right)\right] = -\theta_{\vec{v}} + \left[ -\cos \theta g\left( \nabla_{\vec{u}}\vec{u}, \vec{n} \right)\right].
\]
Therefore,
\[
 \kappa  = -\theta_{\vec{v}} - \cos \theta g \left( \nabla_{\vec{u}}\vec{u}, \vec{n} \right).
\]  	

A similar computation leads to 
\[\tau = -\theta_{\vec{v}^{\perp}} + \sin \theta g(\nabla_{\vec{u}}{\vec{n}}, \vec{n}).\]
\end{proof}

\textbf{The proof of Theorem \ref{principal}}: We begin by proving if $\vec{v}$ realizes the lower bound for the volume on $\mathbb{S}^2$ then, $\vec{v}$ is a loxodromic unit vector field.
If $0 \leq \varphi < \pi/2$, from Lemma \ref{lemI} we have  
\[i) \sin \varphi  = \sqrt{\kappa^2+\tau^2} \cos \varphi \hspace{0.4cm} \mbox{and} \hspace{0.4cm} ii) \kappa\sin \theta = \tau \cos \theta.
\]
By condition ii)  
\[\hspace{0.9cm}\theta \neq \pi/2 \hspace{0.2cm} \Rightarrow \hspace{0.2cm} \tau = \kappa \tan{\theta},\]
\[\theta = \pi/2  \hspace{0.2cm} \Rightarrow \hspace{0.2cm}  \kappa = 0.\] Considering $\theta \neq \pi/2 $, we have
\begin{equation}\label{secante}
\sqrt{\kappa^2+\tau^2}=\sqrt{\kappa^2 + \kappa^2\tan^2{\theta}}=\sqrt{\kappa^2(1+ \tan^2 \theta)}=  \left|\kappa\right| \sqrt{\sec^2{\theta}} = \left|\kappa\sec{\theta}\right| =  \frac{\left|\kappa\right|}{\left|\cos \theta\right|}.
\end{equation}
Finally, from (\ref{cond_sharpnessI}) and (\ref{secante}) we write the geodesic curvature in the integral curves of $\vec{v}$ as
\begin{equation}\label{cond_sharpnessII}
 \left|\kappa \right|= \cos{\theta}\tan{\varphi}.    
\end{equation}
Observe that, if $\theta = \pi/2$ then $|\kappa|=0$ and  $\tau =|\tan{\varphi}|$.  
 For $\theta \neq \pi/2 $, $\kappa = \cos{\theta}\tan{\varphi}$. So
\begin{equation}\label{kappatau}
\kappa = \cos{\theta} \tan{\varphi} \hspace{0.3cm}\Rightarrow\hspace{0.3cm} \tau = \sin{\theta} \tan{\varphi}
\end{equation}
Note that the case is similar if we consider $\theta \in (\pi/2,\pi].$

By Lemma \ref{lemII} the geodesic curvature of curves integral tangent to $\vec{v}$ is $-\theta_{\vec{v}}-\cos{\theta}g(\nabla_{\vec{u}}\vec{u}, \vec{n})$. 

Observe that $g(\nabla_{\vec{u}}\vec{u}, \vec{n}) = -\tan{\varphi}$, in that way
\begin{equation}\label{curgeodesica}
       \kappa = -\theta_{\vec{v}}+\cos{\theta}\tan{\varphi}.
\end{equation}
Using (\ref{kappatau}) in (\ref{curgeodesica}) we conclude that $-\theta_{\vec{v}}=0$. So the unit vector field $\vec{v}$ is a loxodromic unit vector field.

 Now, consider $\vec{v}$ as a loxodromic unit vector field. From Lemma \ref{lemII} we have  
\[\kappa  = -\theta_{\vec{v}} - \cos \theta g \left( \nabla_{\vec{u}}\vec{u}, \vec{n} \right) \hspace{1cm}\mbox{and}\hspace{1cm} \tau = -\theta_{\vec{v}^{\perp}} + \sin \theta g(\nabla_{\vec{u}}{\vec{n}}, \vec{n}).\]
Observe that, 
\[g(\nabla_{\vec{u}}\vec{u}, \vec{n}) = -\tan{\varphi}\hspace{1cm}\mbox{and}\hspace{1cm}g(\nabla_{\vec{u}}\vec{n}, \vec{n}) = \tan{\varphi},\]
in that way
\[       \kappa = -\theta_{\vec{v}}+\cos{\theta}\tan{\varphi}\hspace{1cm}\mbox{and}\hspace{1cm}\tau = -\theta_{\vec{v}^{\perp}} + \sin \theta \tan{\varphi},\]
since $\vec{v}$ is a loxodromic unit vector field, then $-\theta_{\vec{v}} = -\theta_{\vec{v}^{\perp}} = 0.$ Therefore,
\begin{equation}\label{curvaturasI}
       \kappa = \cos{\theta}\tan{\varphi}\hspace{2cm}\mbox{and}\hspace{2cm}\tau = \sin \theta \tan{\varphi}.
\end{equation}
Substituting (\ref{curvaturasI}) in $\sqrt{\kappa^2 + \tau^2}$ we obtain
\[\sqrt{\tan^2{\varphi}} = \left|\tan{\varphi} \right|.\]
It implies,
\[\left|\sin \varphi \right| = \sqrt{\kappa^2 + \tau^2}\cos \varphi,\]
for $ - \pi/2 < \varphi < \pi/2.$
From Lemma \ref{lemI}, we conclude that $\vec{v}$ realizes the lower bound for the volume on $\mathbb{S}^2$. 

\textbf{Acknowledgements}  
We would like to thank Fabiano Brito for atracting our attention to the subject of this paper. His comments, suggestions, and encouragements helped shape this article. The first and second authors were financed by Coordena\c c\~ao de Aperfei\c coamento de Pessoal de N\'ivel Superior - Brasil (CAPES) - Finance Code $001$.

\end{document}